\newtheorem{Thm}{Theorem}[section]
\newtheorem{Lem}[Thm]{Lemma}
\newtheorem{Prop}[Thm]{Proposition}
\theoremstyle{definition}
\newtheorem{Def}[Thm]{Definition}
\begin{document}

\title{On the range of self-interacting random walks on an integer interval\footnote{AMS 2000 subject classifications : 60K35.
Key words and phrases : self-interacting random walk, range of random walk. }}
\author{Kazuki Okamura}
\date{}
\maketitle

\begin{abstract}
We consider the range of a one-parameter family of self-interacting  walks on the integers up to the time of exit from an interval.
We derive the weak convergence of an appropriately scaled range. 
We show that the distribution functions of the limits of the scaled range satisfy a certain class of de Rham's functional equations. 
We examine the regularity of the limits.  
\end{abstract}

\section{Introduction}

The range of random walk has been studied for a long time.
Examining the range at the time the random walk leaves an interval is a simple and natural concern.
Recently, Athreya, Sethuraman and  T\'{o}th \cite{AST} considered questions of this kind. 
They studied the range, local times and periodicity or ``parity" statistics of some nearest-neighbor Markov random walks up to the time of exit from an interval of $N$ sites.
They derived several associated scaling limits as $N \to \infty$ and  related the limits to various notions such as the entropy of an exit distribution, generalized Ray-Knight constructions, and Bessel and Ornstein-Uhlenbeck square processes. 

Inspired by \cite{AST}, we consider the ranges of a certain class of  self-interacting random walks up to the time of exit from an interval.
The study of  self-interacting walks originated from the modeling of polymer chains in chemical physics.
There are various models in this study.
We consider the model defined by Denker and Hattori \cite{DH}, Hambly, Hattori and Hattori \cite{HHH}, Hattori and Hattori \cite{HH1}, \cite{HH2}. 
They constructed  a natural one-parameter family of self-repelling and self-attracting walks on  $\mathbb{Z}$ and the infinite pre-Sierpi\'{n}ski gasket.
It interpolates continuously between self-avoiding walk and simple random walk  in the sense of exponents.

In general, most of the studies of  self-interacting walks are difficult  due to the lack of  Markov property, even if they are one-dimensional.
In the studies of  Markov walks,  we can use techniques in analysis, especially, potential theory. 
However, in the case of non-Markov walks, we cannot use most of the techniques used in the studies of  Markov walks.
Most of the arguments in \cite{AST} depend heavily on the Markov property.
Therefore, we have to use  alternative methods for our study.
We apply a recent result by the author \cite{O2} which considers a certain class of de Rham's functional equations. 

Now we state our settings and results briefly.
Let $W_{\infty}$ be the path space of the nearest-neighbor walk starting at $0$ on $\mathbb{Z}$.
Let $\{P^{u}\}_{u \geq 0}$ be a one-parameter family of probability measures on $W_{\infty}$ defined by  \cite{DH} and \cite{HH2}. 
We will give the precise definitions of them in Section 2.
$P^{0}$ defines the self-avoiding walk  on $\mathbb{Z}$ and $P^{1}$ defines the standard simple random walk.
If $u \ne 1$, $P^{u}$ defines a non-Markov random walk on $\mathbb{Z}$.

\begin{Def}
Let $n \in \mathbb{N} = \{1,2, \dots \}$ and $\omega \in W_{\infty}$. 
Let $R_{n}(\omega)$ be the range of $\omega$ up to the time of exit from $\{-2^{n}, \dots, 2^{n}\}$, that is, 
\[ R_{n}(\omega) = \left(\textrm{the number of points which} \  \omega  \ \textrm{visits before it hits the points} \,  \{\pm2^{n}\}\right).\]
Note that $2^{n} \le R_{n} \le 2^{n+1}-1$.  
\end{Def}

Then, we have the following results which are analogous to  \cite{AST}, Proposition 2.1.

\begin{Thm}
$(1)$ Let $u \ge 0$.
Then, the random variables $\{(R_{n}/2^{n}) - 1\}_{n}$ converges weakly to 
a distribution function $f_{u}$ on $[0,1]$, $n \to \infty$.\\ 
$(2)$ Let $u > 0$.
Then 
$f_{u}$ satisfies  a certain class of de Rham's functional equations $\cite{de}$ $:$ 
\begin{equation}
	f(x) = \begin{cases}
						\Phi(A_{u,0} ; f(2x)) & 0 \leq x \leq 1/2 \\
						\Phi(A_{u,1} ; f(2x-1)) & 1/2  \leq x \leq 1, \tag{1.1}
				          \end{cases}
\end{equation}
where we let 
\[ \Phi(A ; z) = \frac{az + b}{cz + d} \, \text{ for } A = \begin{pmatrix} a & b  \\ c & d \end{pmatrix}, \text{ and, }\]
\[ A_{u, 0} = \begin{pmatrix} x_{u} & 0  \\ -u^{2}x_{u}^{2} & 1 \end{pmatrix}, \,  \, A_{u, 1} = \begin{pmatrix} 0 & x_{u} \\ -u^{2}x_{u}^{2} & 1 - u^{2}x_{u}^{2} \end{pmatrix},  \, \,  x_{u} = \frac{2}{1+\sqrt{1+8u^{2}}}. \] \\
$(3)$ 
Let $\widetilde P^{u}$ be the probability measure on $[0,1]$ such that its distribution function is $f_{u}$. 
If $u = 1$, $\widetilde P^{u}$ is absolutely continuous with respect to the Lebesgue measure on $[0,1]$.  
If $u \neq 1$, $\widetilde P^{u}$ is singular.
\end{Thm}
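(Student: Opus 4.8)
The plan is to study $\widetilde P^u$ through the self-similar structure that (1.1) forces on its dyadic increments, treating $u=1$ by an explicit solution and $u\neq1$ by showing that the Radon--Nikodym derivative with respect to Lebesgue measure degenerates. Iterating (1.1), one sees that for a dyadic word $w=w_1\cdots w_n$ with associated matrix $M_w=A_{u,w_1}\cdots A_{u,w_n}=\begin{pmatrix}\alpha_w&\beta_w\\\gamma_w&\delta_w\end{pmatrix}$ and interval $I_w$ of length $2^{-n}$, the endpoint values of $f_u$ give $\widetilde P^u(I_w)=\Phi(M_w;1)-\Phi(M_w;0)$. Writing $c=f_u(1/2)=\Phi(A_{u,0};1)=\Phi(A_{u,1};0)$, the conditional split into the two halves of $I_w$ is $Z_w=\big(\Phi(M_w;c)-\Phi(M_w;0)\big)/\big(\Phi(M_w;1)-\Phi(M_w;0)\big)$.

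I would first dispose of $u=1$ directly. Here $x_1=1/2$ and $u^2x_u^2=1/4$, so the two branches of (1.1) become $f(x)=2f(2x)/(4-f(2x))$ and $f(x)=2/(3-f(2x-1))$. One checks that the Möbius function $g(x)=2x/(1+x)$ satisfies both branches together with $g(0)=0$, $g(1/2)=2/3$, $g(1)=1$; since the branch maps $\Phi(A_{u,0};\cdot),\Phi(A_{u,1};\cdot)$ are contractions of $[0,1]$, equation (1.1) has a unique bounded solution, so $f_1=g$. As $g\in C^\infty([0,1])$ with $g'(x)=2/(1+x)^2>0$, the law $\widetilde P^1$ has the continuous positive density $2/(1+x)^2$ and is absolutely continuous.

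For $u\neq1$ I would show singularity via the martingale $D_n(x)=2^n\,\widetilde P^u(I_n(x))$, where $I_n(x)$ is the generation-$n$ dyadic interval containing $x$; this is a nonnegative martingale for Lebesgue measure along the dyadic filtration, and $\widetilde P^u\perp\mathrm{Leb}$ is equivalent to $D_n\to\infty$ $\widetilde P^u$-a.s. Under $\widetilde P^u$, given the first $n$ digits equal $w$, the next digit is $0$ with probability $Z_w$ and $D_{n+1}/D_n\in\{2Z_w,\,2(1-Z_w)\}$, so the conditional $\widetilde P^u$-expectation of $\log(D_{n+1}/D_n)$ equals $\log 2-H(Z_w)$ with $H$ the binary entropy. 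Hence, modulo a martingale remainder of order $o(n)$,
\[
\tfrac1n\log D_n\ \longrightarrow\ \kappa:=\log 2-\int H(Z)\,d\pi\ \ge 0\qquad \widetilde P^u\text{-a.s.},
\]
where $\pi$ is the stationary law of the Markov chain on Möbius configurations (equivalently on the slope $\gamma_w/\delta_w$) that drives $Z_w$ under $\widetilde P^u$. Since $\kappa>0$ forces $D_n\to\infty$, hence singularity, everything reduces to proving $\kappa>0$, i.e. that $Z\neq1/2$ on a set of positive $\pi$-measure. (The degenerate case $u=0$ is separate: there the self-avoiding walk never backtracks, so the penetration depth is $0$ and $\widetilde P^0=\delta_0$ is trivially singular.)

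The geometric reason that $u=1$ is the only exceptional value, and the crux of the $\kappa>0$ step, is the following. Conjugating each branch by the solution shows that $f_u$ is globally Möbius --- so that $Z_w\to1/2$ and $\kappa=0$ --- exactly when $\Phi(A_{u,0};\cdot)$ and $\Phi(A_{u,1};\cdot)$ are simultaneously conjugate to the affine maps $z\mapsto z/2$ and $z\mapsto(z+1)/2$, which fix $\infty$ in common; this requires the two branch maps to share a fixed point. Using the identity $u^2x_u^2=(1-x_u)/2$ coming from the definition of $x_u$, the fixed points of $\Phi(A_{u,0};\cdot)$ are $\{0,2\}$ and those of $\Phi(A_{u,1};\cdot)$ are $\{1,\,2x_u/(1-x_u)\}$; these families meet only when $2x_u/(1-x_u)=2$, i.e. $x_u=1/2$, i.e. $u=1$. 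Thus for $u\neq1$ there is no common fixed point, the driving chain cannot be trapped at the symmetric configuration $Z=1/2$, and one expects the strict deficit $\int H\,d\pi<\log 2$. The main obstacle is precisely this final inequality: establishing existence, uniqueness and non-triviality of $\pi$ on the non-compact space of Möbius configurations and ruling out $Z\equiv1/2$ uniformly in $u\neq1$. This is the point at which I would invoke the de Rham singularity criterion of \cite{O2}, after checking that $A_{u,0},A_{u,1}$ satisfy its hypotheses and fail its absolute-continuity (common-fixed-point) condition for every $u\neq1$.
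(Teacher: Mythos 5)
Your proposal has a genuine structural gap: it proves (at best) part (3), while taking parts (1) and (2) for granted. You begin from ``the self-similar structure that (1.1) forces on its dyadic increments,'' but the substance of the theorem is precisely that the walk's range obeys (1.1) at all. The paper has to establish this from the definition of $P^{u}_{N,+}$: first a consistency lemma (Lemma 3.1), showing via the decimation maps $Q_{M}$ that $P^{u}_{N,+}(R_{N}/2^{N}-1\ge k/2^{n})$ does not depend on $N\ge n$, so that a function $g_{u}$ on the dyadic rationals is well defined; then the key combinatorial step (Proposition 3.3), where a path $\omega\in W_{n+1,+}$ is decomposed according to its coarse skeleton $\omega'$ and its excursions $\widetilde\omega_{j}$, the range event factorizes over the excursions, and summing the resulting geometric series $\sum_{m}u^{2m-2}x_{u}^{2m-1}(\cdots)^{m}$ is what produces the M\"obius maps $\Phi(A_{u,0};\cdot)$, $\Phi(A_{u,1};\cdot)$; and finally the passage from $D$ to $[0,1]$ (right-continuous modification, Proposition 3.4) and a dyadic Riemann-sum argument for the weak convergence in part (1). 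None of this appears in your proposal, so there is nothing connecting the self-interacting walk to the functional equation your analysis starts from.

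Within part (3), your $u=1$ argument is actually a nice improvement: $f_{1}(x)=2x/(1+x)$ does solve both branches (since $x_{1}=1/2$, $u^{2}x_{u}^{2}=1/4$), both branch maps are contractions at $u=1$ so the bounded solution is unique, and the explicit density $2/(1+x)^{2}$ gives absolute continuity without citing \cite{O2}. But your $u\neq1$ argument fails at the point you yourself flag, and the fallback does not repair it: the hypotheses (A1)--(A3) of \cite{O2} hold only for $0<u<\sqrt{3}$ (the paper says this explicitly), because for $u\ge\sqrt{3}$ the map $\Phi(A_{u,1};\cdot)$ is no longer a contraction of $[0,1]$ --- indeed for $u>\sqrt{3}$ the measure $\widetilde P^{u}$ even has atoms at dyadic points (Theorem 1.4(2)). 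So ``checking the hypotheses of \cite{O2} for every $u\neq1$'' cannot succeed, e.g.\ at $u=2$ or $u=\sqrt{3}$; atoms would give singularity for $u>\sqrt{3}$ directly, but that is another theorem you would then have to prove, and it still leaves $u=\sqrt{3}$ uncovered. The paper's argument sidesteps all of this and needs no ergodic theory and no contraction hypotheses: Lemma 3.5 shows that if $\tilde g_{u}$ is differentiable with finite derivative at some $x\notin D$, then the ratio $r_{u,n}(x)/s_{u,n}(x)$ of matrix entries must converge to $\gamma_{u}-2$, forcing $\gamma_{u}-2$ to be a common fixed point of $\Phi({}^{t}\!A_{u,0};\cdot)$ and $\Phi({}^{t}\!A_{u,1};\cdot)$, which happens only at $u=1$; hence $\tilde g_{u}'=0$ at every such point, and the Lebesgue differentiation theorem then yields $\tilde g_{u}'=0$ a.e., i.e.\ singularity, for \emph{all} $u\neq1$ simultaneously. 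Your common-fixed-point computation (fixed points $\{0,2\}$ and $\{1,2x_{u}/(1-x_{u})\}$) is the right heuristic for why $u=1$ is exceptional, but in your framework it never becomes a proof of the entropy deficit $\kappa>0$.
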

We remark that $\widetilde P^{0} = \widetilde P^{0}_{n} = \delta_{\{0\}}$, where $\delta$ denotes a point mass.

Let us denote the Hausdorff dimension of $K \subset [0,1]$ 
by $\dim_{H}(K)$.
Let us define the Hausdorff dimension of a probability measure $\mu$ on $[0,1]$ by $\dim_{H} \mu = \inf \{\dim_{H}(K) : K \in \mathcal{B}([0,1]), \mu(K) = 1\}$.
Let $s(p) = -p\log p - (1-p) \log (1-p)$ for $p \in [0,1]$.

If $0 < u < \sqrt{3}$, $(A_{u,0}, A_{u,1})$ satisfies the conditions (A1) - (A3) in \cite{O2}, 
so we can apply the results in \cite{O2} to this case 
and obtain the following results. 
We refer the reader to  \cite{O2} for details. 

\begin{Thm}
$(1)$ If $u \neq 1$ and $0 < u < \sqrt{3}$,
then $\dim_{H} \widetilde P^{u} < 1$.\\
$(2)$ If $0 < u < 1$, 
then $\dim_{H} \widetilde P^{u} \leq s(x_{u})/\log 2$.
Moreover, $\widetilde P^{u}(K) = 0$ for any Borel set $K$ with $\dim_{H}(K) < s(2x_{u}/(1+x_{u}))/ \log 2$.
\end{Thm}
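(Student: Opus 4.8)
The plan is to read off $\widetilde{P}^{u}$ as the self-conformal (de Rham) measure attached to the pair of Möbius maps $\Phi_{0} = \Phi(A_{u,0};\cdot)$ and $\Phi_{1} = \Phi(A_{u,1};\cdot)$, and to reduce the computation of $\dim_{H}\widetilde{P}^{u}$ to the asymptotics of the mass of dyadic cylinders. Writing $I_{w}$ for the dyadic interval coded by a finite word $w \in \{0,1\}^{*}$, the functional equation $(1.1)$ together with $f(0)=0$, $f(1)=1$ gives $\widetilde{P}^{u}(I_{w}) = |\Phi_{w}([0,1])|$, where $\Phi_{w} = \Phi_{i_{1}}\circ\cdots\circ\Phi_{i_{n}}$ for $w = i_{1}\cdots i_{n}$ and each $\Phi_{i}$ is increasing on $[0,1]$ (the determinants $x_{u}$ and $u^{2}x_{u}^{3}$ are positive). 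Hence, with $p_{0}(w) = \widetilde{P}^{u}(I_{w0})/\widetilde{P}^{u}(I_{w})$ the conditional probability of appending the digit $0$, one has the telescoping identity $-\log\widetilde{P}^{u}(I_{i_{1}\cdots i_{n}}(x)) = \sum_{k=1}^{n} \bigl(-\log P(i_{k}\mid i_{1}\cdots i_{k-1})\bigr)$, and $\dim_{H}\widetilde{P}^{u}$ is controlled, via Billingsley's lemma, by the $\widetilde{P}^{u}$-a.e.\ behaviour of $\tfrac{1}{n\log 2}$ times this sum.

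First I would verify the hypotheses (A1)--(A3) of \cite{O2} and record the relevant constants. Using the defining relation $2u^{2}x_{u}^{2} = 1 - x_{u}$, a direct computation gives $\Phi_{0}(0) = 0$ with $\Phi_{0}'(0) = x_{u}$, $\Phi_{1}(1) = 1$ with $\Phi_{1}'(1) = u^{2}x_{u} = (1-x_{u})/(2x_{u})$, and the matching condition $\Phi_{0}(1) = \Phi_{1}(0) = 2x_{u}/(1+x_{u}) = \widetilde{P}^{u}([0,1/2])$. The one genuinely restrictive requirement is that both maps be contractions at their fixed points; since $\Phi_{1}'(1) = (1-x_{u})/(2x_{u}) < 1$ is equivalent to $x_{u} > 1/3$, i.e.\ to $u < \sqrt{3}$, this is exactly the range in which $(A_{u,0},A_{u,1})$ satisfies (A1)--(A3) and the results of \cite{O2} apply.

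Next I would determine the range of the conditional split probability $w \mapsto p_{0}(w)$, which is where the closed forms $s(x_{u})$ and $s(2x_{u}/(1+x_{u}))$ enter. For $0 < u < 1$ we have $x_{u}\in(1/2,1)$, and I would show that $p_{0}(w)$ stays in $[x_{u},\, 2x_{u}/(1+x_{u})] \subset (1/2,1)$, with infimum $x_{u}$ approached along $0^{\infty}$ (where the split ratio converges to $\Phi_{0}'(0) = x_{u}$) and supremum $2x_{u}/(1+x_{u})$ attained at the empty word. Because $s$ is strictly decreasing on $[1/2,1]$, the conditional entropy $s(p_{0}(w))$ then satisfies $s(2x_{u}/(1+x_{u})) \le s(p_{0}(w)) \le s(x_{u})$ for every $w$, and the conditional probabilities stay bounded away from $\{0,1\}$.

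Finally I would pass from cylinder masses to dimension. Conditioned on the past, $i_{k}$ has entropy $s(p_{0}(\cdot))$, so the bounded martingale differences $-\log P(i_{k}\mid\text{past}) - s(p_{0}(\text{prefix}))$ average to zero $\widetilde{P}^{u}$-a.e.\ by the martingale strong law; combined with the two-sided bound above this sandwiches $\liminf_{n}\tfrac{-\log\widetilde{P}^{u}(I_{n}(x))}{n\log 2}$ between $s(2x_{u}/(1+x_{u}))/\log 2$ and $s(x_{u})/\log 2$ for a.e.\ $x$. Billingsley's lemma then yields part $(2)$: a full-measure set of Hausdorff dimension at most $s(x_{u})/\log 2$, and $\widetilde{P}^{u}(K)=0$ whenever $\dim_{H}(K) < s(2x_{u}/(1+x_{u}))/\log 2$. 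For part $(1)$, when $0<u<1$ the bound $s(x_{u})/\log 2 < 1$ (valid since $x_{u}\neq 1/2$) already gives $\dim_{H}\widetilde{P}^{u} < 1$, while for $1 < u < \sqrt{3}$ I would instead invoke the general theorem of \cite{O2} that, under (A1)--(A3), forces $\dim_{H}\widetilde{P}^{u} < 1$ unless $\widetilde{P}^{u}$ is the distinguished absolutely continuous solution; by part $(3)$ of the preceding theorem this exceptional case is exactly $u=1$, which is excluded. The main obstacle is the extremal analysis of the third paragraph: proving that $p_{0}(w)$ never leaves $[x_{u},\,2x_{u}/(1+x_{u})]$ and that the extremes occur at $0^{\infty}$ and the empty word. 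This is a monotonicity statement for the induced map on split ratios, precisely the content that conditions (A1)--(A3) of \cite{O2} are designed to supply; the restriction to $0<u<1$ for the explicit constants reflects that for $u>1$ the extremising configuration, and hence the closed form, changes.
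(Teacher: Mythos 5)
Your proposal is correct in substance, but it is worth knowing that the paper itself contains no self-contained proof of this theorem: the entire argument is the remark preceding the statement, namely that $(A_{u,0},A_{u,1})$ satisfies conditions (A1)--(A3) of \cite{O2} when $0<u<\sqrt{3}$, after which both parts are quoted from \cite{O2}. So the portions of your proposal that lean on \cite{O2} (the case $1<u<\sqrt{3}$ of part (1), and the verification that the contraction condition $\Phi_{1}'(1)=(1-x_{u})/(2x_{u})<1$ is equivalent to $u<\sqrt{3}$) coincide with what the paper does, while your direct argument for part (2) and for $0<u<1$ in part (1) --- cylinder masses, conditional entropies, the martingale strong law, and Billingsley's lemma --- is a genuinely different, self-contained route that reconstructs what the paper outsources. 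Your constants are all correct (using $2u^{2}x_{u}^{2}=1-x_{u}$ one gets $\Phi_{0}'(0)=x_{u}$, $\Phi_{1}'(1)=u^{2}x_{u}$, $\Phi_{0}(1)=\Phi_{1}(0)=2x_{u}/(1+x_{u})$), and the step you flag as the main obstacle --- that $p_{0}(w)\in[x_{u},\,2x_{u}/(1+x_{u})]$ for every word $w$ --- is in fact true and closes by an elementary invariance computation rather than an appeal to \cite{O2}: writing $\rho_{w}=r_{w}/s_{w}$ for the bottom-row ratio of the matrix product, one has $p_{0}(w)=t(1+\rho_{w})/(t\rho_{w}+1)$ with $t=2x_{u}/(1+x_{u})$, which is increasing in $\rho_{w}$, and appending a digit acts by $\rho_{w0}=x_{u}\rho_{w}-u^{2}x_{u}^{2}$ and $\rho_{w1}=-u^{2}x_{u}^{2}/(x_{u}\rho_{w}+1-u^{2}x_{u}^{2})$; since $u^{2}x_{u}^{2}=(1-x_{u})/2$, the digit-$0$ map has fixed point exactly $-1/2$, and both maps leave $[-1/2,0]$ invariant precisely when $x_{u}\geq 1/2$, i.e.\ $u\leq 1$, whence $\rho_{w}\in(-1/2,0]$ for all finite words, with supremum $0$ at the empty word and infimum $-1/2$ approached along $0^{n}$ --- exactly your claimed interval for $p_{0}(w)$. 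Comparing the two routes: the paper's is shorter and treats all of $0<u<\sqrt{3}$ uniformly, but hides the mechanism inside \cite{O2}; yours exposes where the constants $s(x_{u})$ and $s(2x_{u}/(1+x_{u}))$ come from and explains why the explicit bounds are confined to $0<u<1$ (the invariance of $[-1/2,0]$, hence the identification of the extremisers, fails once $x_{u}<1/2$).
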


We also examine whether $\widetilde P^{u}$ has atoms.

\begin{Thm}
$(1)$ Let $u \leq \sqrt{3}$.
Then, $\widetilde P^{u}$ has no atoms.\\
$(2)$ Let $u > \sqrt{3}$.
Then, $\widetilde P^{u}(\{x\}) > 0$ for any $x \in D \cap (0,1]$.
Here $D$ is the set of dyadic rationals on $[0,1]$. 
\end{Thm}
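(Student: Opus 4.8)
\medskip\noindent\textbf{Proof proposal.}\ The plan is to read off the jumps of $f_u$ from the self-similar (de Rham) structure. Write $\phi_i := \Phi(A_{u,i};\,\cdot\,)$ for $i=0,1$ and $\phi_w := \phi_{w_1}\circ\cdots\circ\phi_{w_n}$ for a finite word $w=w_1\cdots w_n\in\{0,1\}^n$. From the definition of $x_u$ one checks the identity $u^2x_u^2=(1-x_u)/2$, whence
\[ \phi_0(z)=\frac{2x_u z}{2-(1-x_u)z},\qquad \phi_1(z)=\frac{2x_u}{(1+x_u)-(1-x_u)z}. \]
Both are increasing homeomorphisms of $[0,1]$ (their poles lie at $2/(1-x_u)>1$ and $(1+x_u)/(1-x_u)>1$), mapping $[0,1]$ onto $[0,c]$ and $[c,1]$ respectively, where $c:=\phi_0(1)=\phi_1(0)=2x_u/(1+x_u)$. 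Thus $f_u$ is the de Rham solution of $(1.1)$ with $f_u(0)=0$, $f_u(1)=1$, and it maps the dyadic interval $I_w$ onto $\phi_w([0,1])$. Since $\widetilde P^u(\{x\})$ equals the jump $f_u(x)-f_u(x-)$, I would compute the one-sided limits of $f_u$ at a dyadic point from its two binary expansions, which reduces everything to the asymptotics of the iterates $\phi_0^{(m)}$ and $\phi_1^{(m)}$.

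First I would analyse the fixed points. The map $\phi_0$ has the attracting fixed point $0$ for every $u>0$, as $\phi_0'(0)=x_u<1$, so $\phi_0^{(m)}(y)\to0$ for all $y\in[0,1]$. The map $\phi_1$ has the two fixed points $1$ and $z^\ast:=2x_u/(1-x_u)$, with $\phi_1'(1)=(1-x_u)/(2x_u)$. Hence $1$ is attracting exactly when $x_u>1/3$, i.e. when $u<\sqrt3$; it is parabolic (and $z^\ast=1$) at $u=\sqrt3$; and for $u>\sqrt3$ one has $z^\ast<1$ attracting inside $[0,1]$, with $\phi_1^{(m)}(0)\nearrow z^\ast$. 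Consequently
\[ f_u(1-)=\lim_{m\to\infty}\phi_1^{(m)}(0)=\begin{cases} 1, & u\le\sqrt3,\\ z^\ast, & u>\sqrt3. \end{cases} \]

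For part $(2)$, take $u>\sqrt3$. At $x=1$ this already gives the positive atom $f_u(1)-f_u(1-)=1-z^\ast=(1-3x_u)/(1-x_u)>0$. For a general dyadic $x\in(0,1)$, write its terminating expansion $x=0.b_1\cdots b_{n-1}1$ and set $w=b_1\cdots b_{n-1}$. Reading $f_u(x)$ from $0.b_1\cdots b_{n-1}1\overline0$ and $f_u(x-)$ from $0.b_1\cdots b_{n-1}0\overline1$, and using the limits above, gives
\[ f_u(x)=\phi_w(\phi_1(0)),\qquad f_u(x-)=\phi_w\bigl(\phi_0(z^\ast)\bigr). \]
A direct computation yields $\phi_1(0)-\phi_0(z^\ast)=2x_u(1-3x_u)\big/\bigl[(1+x_u)(1-x_u)^2\bigr]>0$ for $u>\sqrt3$, and since $\phi_w$ is strictly increasing the jump $\widetilde P^u(\{x\})=\phi_w(\phi_1(0))-\phi_w(\phi_0(z^\ast))$ is strictly positive, proving $(2)$.

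For part $(1)$ take $0<u\le\sqrt3$ (the case $u=0$ being the point mass $\delta_{\{0\}}$), so that $f_u(1-)=1$. Then for every dyadic $x=0.b_1\cdots b_{n-1}1\in(0,1)$ the left limit becomes $f_u(x-)=\phi_w(\phi_0(1))$, which equals $f_u(x)=\phi_w(\phi_1(0))$ by the junction identity $\phi_0(1)=\phi_1(0)$; hence there is no jump at any dyadic point. To exclude atoms at the remaining (non-dyadic) points $x=0.b_1b_2\cdots$, note that such $x$ lies in the interior of every $I_{b_1\cdots b_n}$, so the jump is bounded by $\lim_n\mathrm{diam}\,\phi_{b_1\cdots b_n}([0,1])$. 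For $0<u<\sqrt3$ both maps are strict contractions, since $\sup_{[0,1]}\phi_0'=4x_u/(1+x_u)^2<1$ and $\sup_{[0,1]}\phi_1'=(1-x_u)/(2x_u)<1$; the common factor forces geometric decay of the diameters, so $f_u$ is continuous (this is also the regime where hypotheses (A1)--(A3) of \cite{O2} apply). The hard part is the boundary value $u=\sqrt3$, where $\phi_1$ is only non-expanding ($\phi_1'\le1$, with equality only at $1$) and uniform contraction fails; I would handle it by observing that $\phi_0$ is still a strict contraction (factor $3/4$ at $x_u=1/3$) while $\phi_1$ is $1$-Lipschitz, so $\phi_{b_1\cdots b_n}$ is $(3/4)^{k_n}$-Lipschitz where $k_n$ counts the $0$'s among $b_1,\dots,b_n$, and a non-dyadic expansion has $k_n\to\infty$. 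Together with the dyadic computation this gives continuity, proving $(1)$. The main obstacles are thus precisely the parabolic case $u=\sqrt3$ and the careful justification of the iteration limits, in particular the one-sided parabolic convergence $\phi_1^{(m)}(0)\nearrow1$ there.
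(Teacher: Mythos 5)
Your proposal is correct, and your part (2) is essentially the paper's own argument: both read the jump at a dyadic $x$ off the two binary expansions, identify the left limit through the interior attracting fixed point $z^{\ast}=2x_u/(1-x_u)$ of $\Phi_{u,1}$ (the paper's $z_1$), and get positivity from strict monotonicity of the composed map; your $\phi_w(\phi_1(0))-\phi_w(\phi_0(z^{\ast}))$ is exactly the paper's $\phi(1)-\phi(z_1)$ with $\phi=\Phi_{u,x_1}\circ\cdots\circ\Phi_{u,x_{m-1}}\circ\Phi_{u,0}$. Where you genuinely diverge is the critical case $u=\sqrt{3}$ of part (1). The paper proves the uniform statement (3.4), that the maximal level-$m$ dyadic increment tends to $0$, via the derivative comparisons of Lemma 3.6 ($h_0<h_1$, $h_i'$ increasing, $h_0'\le 3h_1'$ everywhere and $h_0'\le h_1'$ above $h_1^2(0)$) and the chain-rule/integral bound $g(k/2^m)-g((k-1)/2^m)\le 9\bigl(1-h_1^m(0)\bigr)=9/(m+1)$. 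You instead argue pointwise: at dyadic points the jump vanishes by the junction identity $\phi_0(1)=\phi_1(0)$ combined with the parabolic convergence $\phi_1^{(m)}(0)=m/(m+1)\nearrow 1$, and at non-dyadic points you use that $\phi_0$ remains a strict $3/4$-contraction while $\phi_1$ is $1$-Lipschitz, so $\phi_{b_1\cdots b_n}$ is $(3/4)^{k_n}$-Lipschitz with $k_n\to\infty$ the number of zero digits (infinite precisely because $x$ is non-dyadic). Both routes are complete — your dyadic/non-dyadic split together with monotonicity of $f_u$ gives continuity at every point, hence no atoms — and yours is more elementary, avoiding Lemma 3.6 entirely; the paper's estimate buys more, namely a quantitative, uniform equicontinuity bound over all dyadic increments at once rather than mere pointwise continuity. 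One caveat to make explicit in a final write-up: the endpoint identities $f_u(\zeta_n(x))=\phi_{b_1\cdots b_n}(0)$ and $f_u(\zeta_n(x)+2^{-n})=\phi_{b_1\cdots b_n}(1)$ that you invoke should not be attributed to de Rham's theory (which requires contractivity and is unavailable for $u\ge\sqrt{3}$), but to iterating (1.1) for the specific solution $f_u=\tilde g_u$ constructed in Theorem 1.2, i.e., the paper's Proposition 3.4(1)--(2).
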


In Section 2, we will describe the settings.
In Section 3, we will show Theorem 1.2 and Theorem 1.4.

\textit{Acknowledgements.} \, The author wishes to express his gratitude to Professor Shigeo Kusuoka and  Professor Tetsuya Hattori and Professor \\
Kumiko Hattori for their comments. 
The author wishes to express his gratitude to the referee for careful reading.  

\section{Preliminaries}

We briefly state our settings by following \cite{DH} and \cite{HH1}. 
See the references for details. 

For each $n \in \mathbb{N} \cup \{0\}$, let
\[ W(n) = \{  (\omega(0),\omega(1) \dots \omega(n)) \in \mathbb{Z}^{n+1} : \omega(0) = 0, |\omega(i) - \omega(i + 1)| = 1, \ 0 \leq i \leq n-1 \}.\]
Let $W^{*} = \cup_{n = 0}^\infty W(n)$.
Let $L(\omega) = n$ for $\omega \in W(n)$.
For $\omega \in W^{*}$, we define $T_{i}^{M}(\omega)$, 
$i, M \in \mathbb{N} \cup \{0\}$,  
by $T_{0}^{M}(\omega) = 0$,
\[ T_{i}^{M}(\omega) = \min \left\{ j > T_{i - 1}^{M}(\omega) : \omega(j) \in {2^{M}\mathbb{Z}} \setminus \{\omega(T_{i - 1}^{M}(\omega))\} \right\}, i \ge 1.\]
Let $T_{i}^{M}(\omega) = +\infty$ if the above minimum does not exist.

We define a decimation map  $Q_{M} : W^{*} \to W^{*}$, $M \in \mathbb{N}$,  by $(Q_{M}\omega)(i) = \omega(T_{i}^{M}(\omega))$ 
for $i$ such that $T_{i}^{M}(\omega) < +\infty$.
Let $Q_{0}$ be the identity map on $W^{*}$. 
Let $(2^{-M}Q_{M}\omega)(i) = 2^{-M}\omega(T_{i}^{M}(\omega))$.
Then, $2^{-M}Q_{M}\omega \in W^{*}$ and $L(2^{-M}Q_{M}\omega) = k$,
where $k = \max \{i: T_{i}^{M}(\omega) < \infty \}$.
Let $W_{N, +(resp. -)} = \{ \omega \in W^{*} : L(\omega) = T_{1}^{N}(\omega), \omega(T_{1}^{N}(\omega)) = + (\textrm{resp.} -) 2^{N} \}$ and 
$W_{N} = W_{N,+} \cup W_{N,-}$.

For $\omega \in W_{N + n,+}$, let $\omega^{\prime} = 2^{-N}Q_{N}\omega$.
For $1 \leq j \leq L(\omega^{\prime})$, we let 
$\omega_{j} = \left(0, \, \omega(T_{j - 1}^{N}(\omega) + 1) - \omega(T_{j - 1}^{N}(\omega)), \dots , \, \omega(T_{j }^{N}(\omega)) - \omega(T_{j - 1}^{N}(\omega))\right) \in W_{N}$,
and, 
$\widetilde\omega_{j} = \textrm{sign}\left(\omega(T_{j }^{N}(\omega)) - \omega(T_{j - 1}^{N}(\omega))\right)\omega_{j} \in W_{N,+}$.

Now we will define  a probability measure $P_{N, \pm}^{u}$, $u \geq 0$, on $W_{N, \pm}$ by induction on $N$ in the following manner.
We recall that $x_{u} = 2 / (1+\sqrt{1+8u^{2}})$. 
Let $P_{1,+}^{u}(\{ \omega \}) = u^{L(\omega) - 2}x_{u}^{L(\omega) - 1}$, $\omega \in W_{1,+}$,
where we adopt the conventions $0^{0} = 1$ and  $0^{n} = 0$, $n \geq 1$.
For $\omega \in W_{N + 1,+}$, 
let \[ P_{N + 1, +}^{u}(\{\omega\}) = P_{1, +}^{u}(\{\omega^{\prime}\}) \prod_{i = 1}^{L(\omega^{\prime})} P_{N, +}^{u}(\{\widetilde\omega_{i}\}). \tag{2.1} \]
We define $P_{N, -}^{u}(\{\omega\}) = P_{N, +}^{u}(\{-\omega\})$ for $\omega \in W_{N, -}$, $N \in \mathbb{N}$.
Let $P_{N}^{u}$ be a probability measure  on $W_{N}$ given by $P_{N}^{u} = (P_{N, +}^{u} + P_{N, -}^{u}) / 2$.

We denote the set of the paths of infinite length by   
\[ W_{\infty} = \left\{ (\omega(0),\omega(1), \dots) \in \mathbb{Z}^{\mathbb{N} \cup \{0\}}  : \omega(0) = 0, \,  |\omega(i) - \omega(i + 1)| = 1, \, i  \geq 0  \right\}.\]
Let the $\sigma$-algebra on this set be the family of subsets which is generated by cylinder sets.
By \cite{DH}, Proposition 2.5, 
there exists a probability measure $P^{u}$ on $W_{\infty}$ 
such that 
\[ P^{u}\left(\left\{ \omega \in W_{\infty} : \omega(j) = \widetilde\omega(j), \,\,0 \leq j \leq L(\widetilde\omega)\right\}\right) = \frac{1}{2}P_{N, + (\textrm{resp.}-)}^{u}(\{\widetilde\omega\}),\]
for any $\widetilde\omega \in W_{N, +  (\textrm{resp.}-)}$, $N \ge 1$.

\section{Range of random walk on the interval $[-2^{n}, 2^{n}]$ and its scaling limit}

Here and henceforth, we assume that $u > 0$.

First we will show Theorem 1.2. 
The main ingredient of the proof is to show that $g_{u}(k/2^{n}) := P_{n, +}^{u}(R_{n} \le 2^{n} + k-1)$ satisfies (1.1) on the dyadic rationals. 
This depends heavily on the definition of $P_{n, +}^{u}$ in Section 2. 
Then, we will see that the right continuous modification of $g_{u}$ satisfies (1.1) on $[0,1]$.
Next, we will show that the distribution of $R_{n}/2^{n} - 1$ converges to $g_{u}$ weakly as $n \to \infty$ 
and examine the regularity of $g_{u}$.

We remark that $P^{u}(R_{n} = 2^{n} + k) = P_{n, +}^{u}(R_{n} = 2^{n} + k)$, $0 \leq k \leq 2^{n}$, $n \geq 1$.

\begin{Lem}
\[ P_{N, +}^{u}\left(\frac{R_{N}}{2^{N}} - 1 \geq \frac{k}{2^{n}} \right)  = P_{n, +}^{u}\left(\frac{R_{n}}{2^{n}} - 1 \geq \frac{k}{2^{n}} \right), \]
for any $N \geq n$, $0 \leq k \leq 2^{n}$ and $n \geq 1$. 
\end{Lem}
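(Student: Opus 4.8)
The plan is to reduce the identity to a statement about how deep the walk descends into the negative half-line, and then to exploit the self-similarity of the family $\{P_{m,+}^{u}\}$ under the decimation maps. First I would record the following elementary combinatorial fact about any $\omega \in W_{m,+}$: since $\omega$ is a nearest-neighbour path that runs until it first hits $2^{m}$ and stays in $\{-2^{m}+1,\dots,2^{m}\}$ until then, it visits every one of the $2^{m}$ sites $0,1,\dots,2^{m}-1$ (it must traverse them to reach $2^{m}$), while on the negative side it visits precisely $-1,-2,\dots,\min_{0\le t\le L(\omega)}\omega(t)$. Hence
\[ R_{m}(\omega) = 2^{m} - \min_{0 \le t \le L(\omega)} \omega(t), \]
so that, for $m \ge n$, the event $\{R_{m}/2^{m} - 1 \ge k/2^{n}\}$ coincides with $\{\min\omega \le -k\,2^{m-n}\}$, i.e.\ with the event that $\omega$ visits the site $-k\,2^{m-n}$ before leaving the interval.

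Next I would apply this with $m=N$ and decimate at exactly the scale $2^{N-n}$. The point of this choice is that the target level $-k\,2^{N-n}$ lies on the lattice $2^{N-n}\mathbb{Z}$. Writing $\eta = 2^{-(N-n)}Q_{N-n}\omega$, the decimated walk records precisely the successive visits of $\omega$ to $2^{N-n}\mathbb{Z}$, and by definition of the times $T_{i}^{N-n}$ no excursion between two consecutive such visits can touch any point of $2^{N-n}\mathbb{Z}$ other than its endpoints. Consequently $\omega$ visits $-k\,2^{N-n}$ if and only if $\eta$ visits $-k$; in particular the descent event is measurable with respect to $\eta$ alone, even though $\min\omega$ itself also depends on the excursions. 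Moreover $\omega \in W_{N,+}$ forces $\eta \in W_{n,+}$, and the before-exit window of $\omega$ (up to $T_{1}^{N}(\omega)$) corresponds exactly to that of $\eta$ (up to $T_{1}^{n}(\eta)$). Applying the first step to $\eta \in W_{n,+}$, the event $\{\eta \text{ visits } -k\}$ is the same as $\{R_{n}(\eta)/2^{n} - 1 \ge k/2^{n}\}$.

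It therefore remains to prove the single structural input, namely that the law of $\eta = 2^{-(N-n)}Q_{N-n}\omega$ under $P_{N,+}^{u}$ is $P_{n,+}^{u}$. Iterating the defining relation (2.1) along the semigroup property $2^{-(N-n)}Q_{N-n} = (2^{-1}Q_{1})^{N-n}$ of the decimation maps and regrouping the nested skeletons yields the factorisation
\[ P_{N,+}^{u}(\{\omega\}) = P_{n,+}^{u}(\{\eta\}) \prod_{i=1}^{L(\eta)} P_{N-n,+}^{u}(\{\widetilde\omega_{i}\}), \]
where $\widetilde\omega_{i} \in W_{N-n,+}$ are the sign-normalised scale-$(N-n)$ excursions of $\omega$; for $N-n=1$ this is exactly (2.1). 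Summing this identity over the fibre $\{2^{-(N-n)}Q_{N-n}\omega = \eta\}$, equivalently over all tuples $(\widetilde\omega_{i})_{i} \in (W_{N-n,+})^{L(\eta)}$, and using that $\sum_{\xi \in W_{N-n,+}} P_{N-n,+}^{u}(\{\xi\}) = 1$, collapses every excursion factor and leaves $P_{n,+}^{u}(\{\eta\})$, which is the pushforward statement $\big(2^{-(N-n)}Q_{N-n}\big)_{*}\, P_{N,+}^{u} = P_{n,+}^{u}$. Combining the three steps gives
\[ P_{N,+}^{u}\!\left(\frac{R_{N}}{2^{N}} - 1 \ge \frac{k}{2^{n}}\right) = P_{N,+}^{u}(\eta \text{ visits } -k) = P_{n,+}^{u}(\text{visits } -k) = P_{n,+}^{u}\!\left(\frac{R_{n}}{2^{n}} - 1 \ge \frac{k}{2^{n}}\right), \]
as required. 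The main obstacle is the decimation invariance in this last step: the one-step relation (2.1) pulls out only the \emph{coarsest} skeleton, so obtaining the generalised factorisation at an arbitrary intermediate scale requires the tower property of the maps $Q_{M}$ together with careful bookkeeping of how the nested excursions recombine; once this is in place the remaining summation is routine because excursion laws integrate to one, and the first two steps are purely combinatorial.
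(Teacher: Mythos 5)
You prove the lemma along exactly the same lines as the paper: both arguments rewrite the event $\{R_{N}/2^{N} - 1 \ge k/2^{n}\}$ as the event that the walk visits the site $-2^{N-n}k$ (using $R_{N}(\omega) = 2^{N} - \min_{t}\omega(t)$ on $W_{N,+}$), observe that this event is unchanged under decimation because the target site lies on the lattice $2^{N-n}\mathbb{Z}$, and conclude via the pushforward identity $\bigl(2^{-(N-n)}Q_{N-n}\bigr)_{*}P_{N,+}^{u} = P_{n,+}^{u}$. The one genuine difference is how that last identity is handled: the paper simply invokes Proposition 2.2 of \cite{DH}, whereas you derive it from the recursion (2.1) via the generalized factorization $P_{N,+}^{u}(\{\omega\}) = P_{n,+}^{u}(\{\eta\})\prod_{i} P_{N-n,+}^{u}(\{\widetilde\omega_{i}\})$ followed by summing over the fiber; this makes the proof self-contained at the cost of the bookkeeping you acknowledge. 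Your factorization is true and the summation step is sound (the fiber over $\eta$ is exactly the product of the excursion spaces $W_{N-n,+}$, each of whose weights sums to one). One index slip to fix: the case of your factorization that coincides with (2.1) is $n=1$ (coarsest skeleton in $W_{1,+}$, excursions in $W_{N-1,+}$), not $N-n=1$; since (2.1) always peels off the \emph{coarsest} scale, the clean induction runs on $n$ with the excursion scale $N-n$ held fixed --- apply (2.1) to $\omega$, apply the inductive hypothesis to each coarse excursion, and reassemble using (2.1) for $\eta$ --- rather than iterating $2^{-1}Q_{1}$ from the finest scale, whose one-step factorization is not (2.1) and would itself require proof.
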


\begin{proof}
Let $N > n$. Then, 
\begin{align*}
P_{N, +}^{u}\left(\frac{R_{N}}{2^{N}} - 1 \geq \frac{k}{2^{n}} \right)  &= P_{N, +}^{u} \left(\left\{\omega \in W_{N, +}: \omega \, \textrm{hits the point} \, \{-2^{N - n}k\}\right\}\right)  \\
														  			 &= P_{N, +}^{u} \left(\left\{\omega : Q_{N - n} \omega \, \textrm{hits the point} \,   \{-2^{N - n}k\}\right\}\right)  \\
																	 &= P_{N, +}^{u} \left(\left\{\omega : 2^{-(N - n)}Q_{N - n} \omega \, \textrm{hits the point} \, \{-k\}\right\}\right)  \\
														  			 &= P_{n, +}^{u} \left(\left\{\zeta \in W_{n,+}: \zeta \, \textrm{hits the point} \, \{-k\}\right\}\right)  \\
														   			&= P_{n, +}^{u}\left(\frac{R_{n}}{2^{n}} - 1 \geq \frac{k}{2^{n}} \right),
\end{align*}
where in the fourth equality we have used \cite{DH} Proposition 2.2.
\end{proof}

\begin{Def}
(1) Let $g_{u}$ be a function on $D$ given by 
$g_{u}((k+1)/2^{n}) = P_{n, +}^{u}(R_{n} \le 2^{n} + k)$, 
$-1  \le k \le 2^{n}-1$. 
By Lemma 3.1, this is well-defined.
We immediately see that $g_{u}(x)$ is increasing and $g_{u}(0) = 0, g_{u}(1) = 1$.\\
(2)  Let $\tilde g_{u}$ be a function on $[0,1]$ given by $\tilde g_{u}(x) = \lim_{y \in D, y > x, y \to x} g_{u}(y)$, $0 \leq x < 1$ and $\tilde g_{u}(1) = 1$. This is right continuous.
\end{Def}

The following is a key proposition. 
\begin{Prop}
The function $g_{u}$ satisfies $(1.1)$ on $D$, that is, 
\begin{equation*}
	P_{n + 1,+}^{u}({R_{n + 1}} \leq  {2^{n + 1}} + k)  = 
	\begin{cases}
						\Phi\left(A_{u,0} ; P_{n,+}^{u}(R_{n} \leq  2^{n} + k)\right) & -1 \leq k \leq 2^{n} - 1 \\
						\Phi\left(A_{u,1} ; P_{n,+}^{u}(R_{n} \leq  k)\right) & 2^{n}-1 \leq k \leq 2^{n + 1} - 1.
				          \end{cases}
\end{equation*}
\end{Prop}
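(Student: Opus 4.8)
The plan is to reduce the identity to a statement about how far the walk descends, and then to evaluate everything through the product formula (2.1), excursion by excursion, since no Markov property is available.

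First I would record the geometric fact behind Lemma 3.1. Under $P_{N,+}^{u}$ the walk exits at $+2^{N}$, so until the exit it occupies precisely the integer interval $[-M, 2^{N}-1]$, where $-M$ is the lowest site it reaches; hence $R_{N} = 2^{N}+M$ and $\{R_{N} \le 2^{N}+k\} = \{\omega \text{ never descends below } -k\}$. In particular the two quantities on the right-hand side, $a := P_{n,+}^{u}(R_{n} \le 2^{n}+k)$ and $b := P_{n,+}^{u}(R_{n} \le k)$, are the probabilities that a scale-$n$ walk stays at or above the level $-k$, respectively $-(k-2^{n})$. Thus the whole statement becomes a computation of $P_{n+1,+}^{u}(\omega \text{ stays } \ge -k)$.

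Next I would invoke the excursion decomposition. For $\omega \in W_{n+1,+}$ put $\omega' = 2^{-n}Q_{n}\omega \in W_{1,+}$ and let $\widetilde\omega_{1},\dots,\widetilde\omega_{L(\omega')} \in W_{n,+}$ be its sign-normalized excursions between consecutive visits to $2^{n}\mathbb{Z}$, so (2.1) gives $P_{n+1,+}^{u}(\{\omega\}) = P_{1,+}^{u}(\{\omega'\})\prod_{i} P_{n,+}^{u}(\{\widetilde\omega_{i}\})$. The key observation is the asymmetry of excursions: a down-step $c \to c-1$ of $\omega'$ corresponds to an excursion that never passes below its lower endpoint $2^{n}(c-1)$, whereas an up-step $c \to c+1$ corresponds to an excursion starting at $2^{n}c$ and descending exactly to $2^{n}c - m$ with $m = R_{n}(\widetilde\omega) - 2^{n} \in \{0,\dots,2^{n}-1\}$. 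Consequently the barrier $\{\omega \ge -k\}$ binds only the up-excursions, and only those whose starting coarse level is low enough for the maximal overshoot $2^{n}-1$ to reach $-k$. Summing $P_{n+1,+}^{u}(\{\omega\})$ over the excursions for fixed $\omega'$ then turns each unconstrained excursion into a factor $1$ and each constrained up-excursion into the relevant tail $a$ or $b$, reducing the probability to $\sum_{\omega' \in W_{1,+}} P_{1,+}^{u}(\{\omega'\})\cdot(\text{product of the bound factors})$. Writing $P_{1,+}^{u}(\{\omega'\}) = u^{-2}x_{u}^{-1}(ux_{u})^{L(\omega')}$, this is a first-passage generating function for the nearest-neighbor walk on the coarse sites $\{-2,-1,0,1,2\}$ with $\pm 2$ absorbing, weight $t := ux_{u}$ per step and an extra factor on the bound steps. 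For $-1 \le k \le 2^{n}-1$ the barrier lies strictly above $-2^{n}$, so $\omega'$ may never reach $-1$ and every up-step $0 \to 1$ carries the factor $a$; solving $h_{0} = ta\,h_{1},\ h_{1} = t + t h_{0}$ on $\{0,1\}$ gives $u^{-2}x_{u}^{-1}h_{0} = \frac{x_{u} a}{1 - u^{2} x_{u}^{2} a} = \Phi(A_{u,0}; a)$. For $2^{n}-1 \le k \le 2^{n+1}-1$ the up-excursions from $0$ and $2^{n}$ can no longer reach $-k$ and become free, while each up-step $-1 \to 0$ carries $b$; solving $g_{0} = t g_{1} + t g_{-1},\ g_{1} = t + t g_{0},\ g_{-1} = tb\,g_{0}$ gives $g_{0} = \frac{u^{2}x_{u}^{2}}{1 - u^{2}x_{u}^{2}(1+b)}$ and hence $u^{-2}x_{u}^{-1}g_{0} = \frac{x_{u}}{1 - u^{2} x_{u}^{2}(1+b)} = \Phi(A_{u,1}; b)$.

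Finally I would check the overlap $k = 2^{n}-1$, where $a = 1$ and $b = 0$ both yield $x_{u}/(1 - u^{2} x_{u}^{2})$, and note that all the geometric series converge because $u^{2}x_{u}^{2} = (1-x_{u})/2 < 1/2$. The main obstacle is the bookkeeping of the excursion analysis: proving rigorously that down-excursions do not undershoot their endpoints while up-excursions overshoot by at most $2^{n}-1$, and thereby matching the barrier constraint with the correct tail ($a$ versus $b$) at the correct coarse level. Because the walk is non-Markov, the independence that makes the per-excursion summation legitimate comes solely from the product structure of (2.1), so some care is needed to justify summing each $\widetilde\omega_{i}$ over $W_{n,+}$ separately.
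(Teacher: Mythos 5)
Your proposal is correct and takes essentially the same route as the paper: reduce the range event to a barrier event (never descending below $-k$), decompose the path via $Q_{n}$ and the product formula (2.1), observe that only the up-excursions started at the lowest admissible coarse level are constrained (contributing the factor $a = P_{n,+}^{u}(R_{n}\le 2^{n}+k)$ in the first case and $b = P_{n,+}^{u}(R_{n}\le k)$ in the second, all other excursions summing to $1$), and then sum over the coarse paths $\omega'$. The only difference is in the final bookkeeping: the paper evaluates the sum over $\omega'$ explicitly, conditioning on $L(\omega')=2m$ and on the positions of the down-steps to get $\binom{m-1}{i}$, followed by geometric series, whereas you solve the equivalent first-passage linear system with step weight $t=ux_{u}$; both yield the same rational functions $\Phi(A_{u,0};a)$ and $\Phi(A_{u,1};b)$.
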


\begin{proof}
If $k = -1$, we have that $\Phi\left(A_{u,0} ; P_{n,+}^{u}(R_{n} \leq  2^{n} + k)\right) = \Phi(A_{u,0} ; 0) = 0 = P_{n+1,+}^{u}(R_{n+1} \leq  2^{n+1} + k)$.
If $k = 2^{n}-1$, we have that \\
$\Phi\left(A_{u,0} ; P_{n,+}^{u}(R_{n} \leq  2^{n} + k)\right) = \Phi(A_{u,0} ; 1) = \Phi(A_{u,1} ; 0) = \Phi\left(A_{u,1} ; P_{n,+}^{u}(R_{n} \leq  k)\right)$.
Then, it is sufficient to show this assertion in the following two cases.
For any $\omega \in W_{n+1, +}$, define $(\omega^{\prime}, \widetilde\omega_{1}, \dots, \widetilde\omega_{L(\omega^{\prime})})$ as in Section 2.

Case 1. $0 \leq k \leq 2^{n} - 1$.
We have  
\[  P_{n + 1,+}^{u}(R_{n + 1} \leq  2^{n + 1} + k) = \sum_{m = 1}^{\infty} P_{n + 1, +}^{u}\left(\left\{\omega : L(\omega^{\prime}) = 2m, \, R_{n + 1} (\omega) \leq  2^{n + 1} + k\right\} \right). \]
Since $0 \leq k \leq 2^{n} - 1$, 
we see that $\omega^{\prime} \in W_{1,+}$ does not hit $-1$ for any $\omega \in W_{n+1, +}$ with $R_{n + 1} (\omega) \leq 2^{n + 1} + k$.
Then we see that
\[ \left\{\omega : L(\omega^{\prime}) = 2m, \, R_{n + 1}(\omega) \leq  2^{n + 1} + k \right\} \]
\[ = \left\{\omega : \omega^{\prime} = (0,1,0,1, \dots, 0,1,2), \, L(\omega^{\prime}) = 2m, \, R_{n}(\widetilde\omega_{2i -1}) \leq 2^{n} + k, \,1 \leq i \leq m\right\}.\]

By (2.1), 
we see that
\[ P_{n + 1, +}^{u}\left(\left\{\omega : L(\omega^{\prime}) = 2m, \, R_{n + 1}(\omega) \leq  2^{n + 1} + k\right\}\right) \]
\[ = P_{1,+}^{u}\left(\left\{\zeta : \zeta = (0,1,0,1, \dots, 0,1,2), \, L(\zeta) = 2m\right\}\right) \cdot P_{n,+}^{u}(R_{n} \leq  2^{n} + k)^{m}. \]
\[ = u^{2m - 2}x_{u}^{2m - 1} P_{n,+}^{u}(R_{n} \leq  2^{n} + k)^{m}. \]

Then, 
\begin{align*}
 P_{n + 1,+}^{u}(R_{n + 1} \leq  2^{n + 1} + k) &=  \sum_{m = 1}^{\infty}u^{2m - 2}x_{u}^{2m - 1} P_{n,+}^{u}(R_{n} \leq  2^{n} + k)^{m} \\
 &=  \Phi\left(A_{u, 0} ; P_{n,+}^{u}({R_{n}} \leq  {2^{n}} + k)\right),
\end{align*}
which is the desired result.\\

Case 2. $2^{n} \leq k \leq 2^{n + 1} - 1$.

Since $L(\omega^{\prime}) = 2m$, 
we can write $\omega^{\prime} = (0,\epsilon_{1}, 0, \epsilon_{2}, \dots,0, \epsilon_{m - 1}, 0,1,2)$, $\epsilon_{i} \in \{ \pm 1\}$,  $1 \leq i \leq m-1$.
Then we see that 
\[ \left\{\omega : L(\omega^{\prime}) = 2m, \, R_{n + 1}(\omega) \leq  2^{n + 1} + k \right\} \]
\[ = \bigcup_{i = 0}^{m - 1} \left\{\omega : \sharp(j :\epsilon_{j} = -1) = i, \, L(\omega^{\prime}) = 2m, \, R_{n + 1}(\omega) \leq  2^{n + 1} + k \right\}. \]
We remark that the union in the above is disjoint.

For $1 \leq i \leq m - 1$, 
\[ \left\{ \omega :  \sharp(j : \epsilon_{j} = -1) = i, L(\omega^{\prime}) = 2m, R_{n + 1}(\omega) \leq  2^{n + 1} + k \right\} \]
\begin{multline*}
= \bigcup_{1 \leq n_{1} < n_{2} < \dots < n_{i} \leq m-1} \{\omega : \{ j :\epsilon_{j} = -1\} = \{n_{1} < n_{2} < \dots < n_{i}\}, \\
L(\omega^{\prime}) = 2m, R_{n + 1}(\omega) \leq  2^{n + 1} + k \}.
\end{multline*}
We remark that the union in the above is disjoint.

By (2.1),  
\begin{multline*}
P_{n + 1, +}^{u}(\{\omega : \{ j :\epsilon_{j} = -1\} = \{n_{1} < n_{2} < \dots < n_{i}\}, \\
L(\omega^{\prime}) = 2m, R_{n + 1}(\omega) \leq  2^{n + 1} + k  \}) 
\end{multline*} 
\begin{multline*}
= P_{n + 1, +}^{u}(\{ \omega :  \{ j :\epsilon_{j} = -1\} = \{n_{1} < n_{2} < \dots < n_{i}\}, \\
L(\omega^{\prime}) = 2m, R_{n}(\widetilde\omega_{2n_{j}} ) \leq  k, 1 \leq j \leq i \})
\end{multline*}
\[ = P_{1, +}^{u}\left(\left\{\omega^{\prime} : \{ j :\epsilon_{j} = -1\} = \{n_{1} < n_{2} < \dots < n_{i}\}, L(\omega^{\prime}) = 2m \right\}\right) P_{n,+}^{u}(R_{n} \leq  k)^{i} \]
\[ = u^{2m - 2}x_{u}^{2m - 1}\left(P_{n,+}^{u}(R_{n} \leq  k)\right)^{i}. \]

Since the number of choices $\{n_{1} < n_{2} < \cdots < n_{i}\} \subset \{1, \dots, m - 1\}$ is equal to $\dbinom{m - 1}{i}$,
we see that  
\[ P_{n + 1,+}^{u}(\{\omega :  \sharp(j :\epsilon_{j} = -1) = i, L(\omega^{\prime}) = 2m, R_{n + 1}(\omega) \leq  2^{n + 1} + k \}) \]
\begin{multline*}
= \sum_{1 \leq n_{1} < n_{2} < \dots < n_{i} \leq m-1} P_{n + 1,+}^{u}(\{\omega : \{ j :\epsilon_{j} = -1\} = \{n_{1} < n_{2} < \dots < n_{i}\}, \\
L(\omega^{\prime}) = 2m, R_{n + 1}(\omega) \leq  2^{n + 1} + k  \})
\end{multline*}
\[ =  \dbinom{m - 1}{i}u^{2m - 2}x_{u}^{2m - 1}\left(P_{n,+}^{u}(R_{n} \leq  k)\right)^{i}, \, \, 1 \leq i \leq m-1.\]
This is also true for $i = 0$.

Therefore, 
by summing up over $i$, we see that 
\[  P_{n + 1,+}^{u}\left(\left\{\omega : L(\omega^{\prime}) = 2m, R_{n + 1}(\omega) \leq  2^{n + 1} + k \right\}\right) 
 =  u^{2m - 2}x_{u}^{2m - 1} \left(1 + P_{n,+}^{u}(R_{n} \leq  k)\right)^{m - 1}.\]
By summing up over $m$, we see that 
\begin{align*} 
P_{n + 1,+}^{u}(R_{n + 1} \leq  2^{n + 1} + k) 
&=  \sum_{m = 1}^{\infty} u^{2m - 2}x_{u}^{2m - 1} \left(1 + P_{n,+}^{u}(R_{n} \leq  k)\right)^{m - 1}\\
&= \Phi\left(A_{u,1} ; P_{n,+}^{u}(R_{n} \leq  k)\right).
\end{align*}
This completes the proof.
\end{proof}

Next, we will show that $\tilde g_{u}$, which is the right continuous modification of $g_{u}$, satisfies (1.1) on $[0,1]$, not only on $D$. 
We define some notation.
Let $X_{n}(x) = \lfloor2^{n}x\rfloor-2\lfloor2^{n-1}x\rfloor$ and $\zeta_{n}(x) = \sum_{k =1}^{n} 2^{-k}X_{k}(x)$, $x \in [0,1)$, $n \geq 1$.
Then, $\zeta_{n}(x) \leq x < \zeta_{n}(x)+2^{-n}$, $x \in [0,1)$, $n \geq 1$.
Let $\gamma_{u} = 1/ \Phi(A_{u,0}; 1)$.
Let $p_{u,0}(z) = (z+1)/(z+\gamma_{u})$ and $p_{u,1}(z) = 1- p_{u,0}(z)$ for $z > -\gamma_{u}$.
Let \[ \begin{pmatrix} p_{u, n}(x) & q_{u, n}(x)  \\ r_{u, n}(x) & s_{u, n}(x) \end{pmatrix}  = A_{u, X_{1}(x)} \cdots A_{u, X_{n}(x)}, \, \, x \in [0,1), n \geq 1. \]

\begin{Prop}
$(1)$ $g_{u}(\zeta_{m}(x)) = \Phi(A_{u, X_{1}(x)} \cdots A_{u, X_{m}(x)}; 0)$ and \\
$g_{u}(\zeta_{m}(x) + 2^{-m}) = \Phi(A_{u, X_{1}(x)} \cdots A_{u, X_{m}(x)}; 1)$, $x \in [0,1)$, $m \geq 1$.\\
$(2)$ $\tilde g_{u} = g_{u}$ on $D$.\\
$(3)$ $\tilde g_{u}$ satisfies the equation $(1.1)$ on $[0,1]$. 
\end{Prop}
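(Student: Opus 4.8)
My plan is to treat the three parts in order, using (1) to produce the closed forms that make (2) and (3) short. Throughout I will use the \emph{cocycle identity} $\Phi(A;\Phi(B;z))=\Phi(AB;z)$, valid for the fractional linear maps in play, together with the reformulation of Proposition~3.3 in terms of $g_u$: writing $\tau$ for the doubling map on $[0,1]$, for every dyadic $d$ one has $g_u(d)=\Phi(A_{u,X_1(d)};g_u(\tau(d)))$, the branch $X_1(d)\in\{0,1\}$ being chosen according to whether $d\le 1/2$ or $d\ge 1/2$ (the two branches agreeing at $d=1/2$, which is the overlap $k=2^n-1$ in Proposition~3.3). This reformulation is just Proposition~3.3 after the change of variables $d=(k+1)/2^{n+1}$.

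For (1), I would induct on $m$, proving both formulas simultaneously, with the base case $m=1$ a direct geometric-series computation (it gives $g_u(0)=0$, $g_u(1)=1$, and $g_u(1/2)=\Phi(A_{u,1};0)=2x_u/(1+x_u)$). The inductive step rests on the digit shift: since $X_1(\zeta_m(x))=X_1(x)$ and a short computation gives $\tau(\zeta_m(x))=\zeta_{m-1}(\sigma x)$ and $\tau(\zeta_m(x)+2^{-m})=\zeta_{m-1}(\sigma x)+2^{-(m-1)}$, where $\sigma$ shifts the binary digits, applying the reformulated Proposition~3.3 at the dyadic points $\zeta_m(x)$ and $\zeta_m(x)+2^{-m}$, then the induction hypothesis at $\sigma x$, and then the cocycle identity yields the two claimed equalities. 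One must check consistency at the boundary values $d=1/2$ and $d=1$, which reduces to $\Phi(A_{u,0};1)=\Phi(A_{u,1};0)$ and $\Phi(A_{u,1};1)=1$; the latter is exactly the defining relation $2u^2x_u^2=1-x_u$ of $x_u$.

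For (2), fix $d\in D\cap[0,1)$ and write $M=A_{u,X_1(d)}\cdots A_{u,X_m(d)}$ for the matrix of its finite expansion, so that $g_u(d)=\Phi(M;0)$ by part (1). Applying part (1) to $d+2^{-N}$, whose digits are those of $d$ followed by a block of zeros and a final $1$, gives $g_u(d+2^{-N})=\Phi\bigl(M;\Phi(A_{u,0}^{\,N-m-1}A_{u,1};0)\bigr)$. Since $g_u$ is increasing on $D$, $\tilde g_u(d)=\lim_{N\to\infty}g_u(d+2^{-N})$, so it suffices to show that the orbit of $c:=\Phi(A_{u,1};0)=2x_u/(1+x_u)\in(0,1)$ under $\Phi(A_{u,0};\cdot)$ tends to $0$. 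Here I would analyse $\Phi(A_{u,0};z)=x_uz/(1-u^2x_u^2z)$: its fixed points are $0$ and $2$, the multiplier at $0$ is $x_u\in(0,1)$, and a direct inequality (using $z<2$ and $2u^2x_u^2=1-x_u$) shows $0<\Phi(A_{u,0};z)<z$ for $z\in(0,2)$, so the orbit of $c$ decreases monotonically to the attracting fixed point $0$. Combined with the continuity of $\Phi(M;\cdot)$ on $[0,1]$ — each of $\Phi(A_{u,0};\cdot)$ and $\Phi(A_{u,1};\cdot)$ maps $[0,1]$ into $[0,1]$ with denominator bounded away from $0$ there, hence so does any composition — this gives $\tilde g_u(d)=\Phi(M;0)=g_u(d)$.

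For (3), I would transfer the functional equation from $D$ to $[0,1]$ by right approximation, using (2). Given $x\in[0,1/2)$, choose dyadics $d_k\downarrow x$ with $d_k\in(x,1/2)$; then $g_u(d_k)\to\tilde g_u(x)$ and $g_u(2d_k)\to\tilde g_u(2x)$ directly from the definition of $\tilde g_u$, while Proposition~3.3 gives $g_u(d_k)=\Phi(A_{u,0};g_u(2d_k))$, so letting $k\to\infty$ and invoking continuity of $\Phi(A_{u,0};\cdot)$ on $[0,1]$ yields the first case of $(1.1)$. The case $x\in(1/2,1)$ is symmetric with $A_{u,1}$ and $d_k\in(x,1)$, and the remaining points $x\in\{0,1/2,1\}$ are dyadic and are checked directly from (2) and the identities in part (1). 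The main obstacle is the convergence step in (2): the rest is bookkeeping with the cocycle identity and right-continuity, whereas the vanishing of $\Phi(A_{u,0}^{\,j}A_{u,1};0)$ genuinely requires the contraction analysis of $\Phi(A_{u,0};\cdot)$ near its attracting fixed point $0$.
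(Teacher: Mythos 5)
Your proposal is correct and follows essentially the same route as the paper: part (1) by induction on dyadics via Proposition 3.3, part (2) by expressing the right limit through part (1) and showing the relevant orbit under $\Phi(A_{u,0};\cdot)$ tends to $0$, and part (3) by right-approximation with dyadic sequences together with right-continuity of $\tilde g_{u}$ and continuity of the fractional linear maps. The only cosmetic difference is that you justify the convergence $\Phi(A_{u,0}^{\,j}A_{u,1};0)\to 0$ by a fixed-point/monotonicity analysis, where the paper simply invokes that $\Phi(A_{u,0};\cdot)$ is a contraction on $[0,1]$.
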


\begin{proof}
(1) Using $(1.1)$, we can show the assertion by induction in $n$. 

(2) By noting the definition of $g_{u}$ and $\tilde g_{u}$, 
we have that $\tilde g_{u}(1) = 1 = g_{u}(1)$.
Let $x \in D \cap [0,1)$.
Then, there exists $N$ such that $X_{n}(x) = 0$, $n > N$.

Then, by using the assertion (1), 
\begin{align*}
 \lim_{l \to \infty}g_{u}(x+ 2^{-l}) &= \lim_{l \to \infty} g_{u}(\zeta_{l}(x) + 2^{-l}) \\
 &= \lim_{m \to \infty}\Phi\left(A_{u, X_{1}(x)} \cdots A_{u, X_{N}(x)}; \Phi(A_{u,0}^{m};1)\right).
\end{align*}
Since $\Phi(A_{u,0}; \cdot)$ is a contraction map on $[0,1]$,
$\lim_{m \to \infty}\Phi(A_{u,0}^{m};1) = 0$.
Then, by using the assertion (1), 
\[ \lim_{m \to \infty}\Phi\left(A_{u, X_{1}(x)} \cdots A_{u, X_{N}(x)}; \Phi(A_{u,0}^{m};1)\right) = \Phi\left(A_{u, X_{1}(x)} \cdots A_{u, X_{N}(x)};0\right) = g_{u}(x).\]
Thus we obtain the assertion (2).

(3) Since $\tilde g_{u}(1) = 1$ and $\Phi(A_{u,1};1) = 1$,
(1.1) holds for $x = 1$. 

Let $x \in [0,1/2)$.
Then there exists a sequence $\{x_{n}\}_{n} \subset D \cap [0,1/2)$ such that $x_{n} \downarrow x$.
By using Proposition 3.3 and the assertion (2), 
$\tilde g_{u}(x_{n}) = \Phi\left(A_{u,0}; \tilde g_{u}(2x_{n})\right)$, $n \geq 1$.
Since $\Phi(A_{u,0}; \cdot)$ is continuous and $\tilde g_{u}$ is right continuous, 
we have that $\tilde g_{u}(x) = \Phi\left(A_{u,0}; \tilde g_{u}(2x)\right)$.

In the same manner, we see that $\tilde g_{u}(x) = \Phi\left(A_{u,1}; \tilde g_{u}(2x-1)\right)$ for $x \in [1/2, 1)$.
Thus we obtain the assertion (3).
\end{proof}

\begin{proof}[Proof of Theorem 1.2]
First, we show the assertion (1).
Let $\widetilde P_{n}^{u} = P^{u} \circ \left((R_{n}/2^{n}) - 1\right)^{-1}$. 
Let $\widetilde P^{u}$ be  the probability measure on $[0,1]$ whose distribution function is $\tilde g_{u}$ and satisfying $\widetilde P^{u}(\{0\}) = 0$.
In other words, we will show that the function $f_{u}$ in the statement in Theorem 1.2 is equal to $\tilde g_{u}$. 
It suffices to show that $\widetilde P_{n}^{u}$ converges weakly to $\widetilde P^{u}$, that is,
for any continuous function $f$ on $[0,1]$, 
\[ \lim_{n \to \infty} \int_{[0,1]} f(x) \widetilde P_{n}^{u}(dx) = \int_{[0,1]} f(x) \widetilde P^{u}(dx). \tag{3.1} \]

Let $\epsilon > 0$.
Then, 
$\max_{1 \leq k \leq  2^{m}} \left| f(k/2^{m}) - f((k - 1)/2^{m}) \right|  < \epsilon$ for some $m$. 
We have that
\begin{equation}
 \left| \int_{[0,1]} f(x) \widetilde P_{n}^{u}(dx) - \sum_{k = 1}^{2^{m}}  f\left(\frac{k}{2^{m}}\right) \widetilde P_{n}^{u}\left(\left[\frac{k - 1}{2^{m}}, \frac{k}{2^{m}}\right) \right)   \right|  < \epsilon ,\tag{3.2} \end{equation}
and, 
\begin{equation} \left| \int_{[0,1]} f(x) \widetilde P^{u}(dx) -  \sum_{k = 1}^{2^{m}}  f\left(\frac{k}{2^{m}}\right) \widetilde P^{u}\left(\left(\frac{k - 1}{2^{m}}, \frac{k}{2^{m}}\right] \right) \right|  < \epsilon, \tag{3.3}
\end{equation}
where we have used $\widetilde P_{n}^{u}(\{1\}) =  P^{u}(R_{n} = 2^{n + 1}) = P_{n,+}^{u}(R_{n} = 2^{n + 1}) = 0$ for the first inequality, 
and, $\widetilde P^{u}(\{0\}) = 0$ for the second.

Let $n > m$.
Then, by using Lemma 3.1,  we see that for $1 \leq k \leq 2^{m}$,
\[ \widetilde P_{n}^{u}\left(\left[\frac{k - 1}{2^{m}}, \frac{k}{2^{m}}\right) \right) = \widetilde P_{m}^{u}\left(\left[\frac{k - 1}{2^{m}}, \frac{k}{2^{m}}\right) \right) = g_{u}\left(\frac{k}{2^{m}}\right) -  g_{u}\left(\frac{k-1}{2^{m}}\right). \]

By using Proposition 3.4(2), 
we see that for $1 \leq k \leq 2^{m}$,
\[ \widetilde P^{u}\left(\left(\frac{k - 1}{2^{m}}, \frac{k}{2^{m}}\right] \right) = \tilde g_{u}\left(\frac{k}{2^{m}}\right) - \tilde g_{u}\left(\frac{k - 1}{2^{m}}\right) = g_{u}\left(\frac{k}{2^{m}}\right) -  g_{u}\left(\frac{k - 1}{2^{m}}\right).\]
Therefore, 
we see that 
\[ \sum_{k = 1}^{2^{m}}  f\left(\frac{k}{2^{m}}\right) \widetilde P_{n}^{u}\left(\left[\frac{k - 1}{2^{m}}, \frac{k}{2^{m}}\right) \right) = \sum_{k = 1}^{2^{m}}  f\left(\frac{k}{2^{m}}\right) \widetilde P^{u}\left(\left(\frac{k - 1}{2^{m}}, \frac{k}{2^{m}}\right] \right). \]
Recalling (3.2) and (3.3), 
we see that for any $n > m$, 
\[ \left| \int_{[0,1]} f(x) \widetilde P_{n}^{u}(dx) - \int_{[0,1]} f(x) \widetilde P^{u}(dx)\right|  < 2\epsilon. \]
Thus we see (3.1) and the proof of (1) completes.

The assertion (2) immediately follows from the definition of $\widetilde P^{u}$ and Proposition 3.4(3).

Finally, we show the assertion (3).
Let $u=1$.
Then, the absolute continuity of $\widetilde P^{1}$ follows from \cite{O2}, Theorem 1.2(1).

\begin{Lem}
Let $u \neq 1$.
Let $x \in [0,1] \setminus D$.
If $\tilde g_{u}$ is differentiable at $x$ and $\tilde g_{u}^{\prime}(x) \in [0,+\infty)$,
then, $\tilde g_{u}^{\prime}(x) = 0$.
\end{Lem}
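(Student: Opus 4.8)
The plan is to translate differentiability of $\tilde g_u$ at the non-dyadic point $x$ into a multiplicative recursion for the increments of $\tilde g_u$ over the dyadic intervals shrinking to $x$, and to show that a finite positive derivative would force $x_u = 1/2$, i.e. $u=1$. For $m\geq1$ write $M_m = A_{u,X_1(x)}\cdots A_{u,X_m(x)}$, whose bottom row is $(r_{u,m}(x),s_{u,m}(x))$, and set $D_m = \tilde g_u(\zeta_m(x)+2^{-m}) - \tilde g_u(\zeta_m(x))$. Since $\zeta_m(x),\zeta_m(x)+2^{-m}\in D$, Proposition 3.4(1)--(2) gives $D_m = \Phi(M_m;1)-\Phi(M_m;0)$, and the identity $\Phi(M;1)-\Phi(M;0) = (\det M)/((r+s)s)$ (for $M$ with bottom row $(r,s)$) yields
\[ D_m = \frac{\det M_m}{(r_{u,m}(x)+s_{u,m}(x))\,s_{u,m}(x)}, \]
where $s_{u,m}(x)$ and $r_{u,m}(x)+s_{u,m}(x)$ are the denominators of the values $\tilde g_u(\zeta_m(x)),\tilde g_u(\zeta_m(x)+2^{-m})\in[0,1]$, hence nonzero. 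If $D_{m_0}=0$ for some $m_0$, then $\tilde g_u$ is constant on $[\zeta_{m_0}(x),\zeta_{m_0}(x)+2^{-m_0}]$, which contains $x$ in its interior because $x\notin D$, so $\tilde g_u^{\prime}(x)=0$; thus assume $D_m>0$ for all $m$. Because $x\notin D$ we have the strict straddle $\zeta_m(x)<x<\zeta_m(x)+2^{-m}$, so a standard straddling argument shows $2^mD_m\to L:=\tilde g_u^{\prime}(x)$.

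Next I pass to the increment ratio. Using $M_{m+1}=M_mA_{u,X_{m+1}(x)}$, multiplicativity of the determinant, and the identity $u^2x_u^2 = (1-x_u)/2$ (so $\det A_{u,0}=x_u$, $\det A_{u,1}=u^2x_u^3$ and $1-u^2x_u^2=(1+x_u)/2$), a short computation shows that $D_{m+1}/D_m$ depends only on $t_m := r_{u,m}(x)/s_{u,m}(x)$ and on the digit $X_{m+1}(x)$: with
\[ \theta(t) = \frac{x_u(t+1)}{x_u t + (1+x_u)/2}, \]
one has $D_{m+1}/D_m = \theta(t_m)$ if $X_{m+1}(x)=0$ and $D_{m+1}/D_m = 1-\theta(t_m)$ if $X_{m+1}(x)=1$. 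If $L>0$, then $2^mD_m\to L$ and $2^{m+1}D_{m+1}\to L$ give $D_{m+1}/D_m\to 1/2$; since $x\notin D$ both digits occur for infinitely many $m$, so in either case $\theta(t_m)\to 1/2$.

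This produces a contradiction. As $\theta$ is a nondegenerate M\"obius map (its matrix has determinant $x_u(1-x_u)/2\neq0$), $\theta(t_m)\to1/2$ forces $t_m\to t^{*}:=\theta^{-1}(1/2)=(1-3x_u)/(2x_u)$; the alternative $t_m\to\infty$ is excluded as it gives $\theta(t_m)\to1$. On the other hand, for the infinitely many $m$ with $X_{m+1}(x)=0$ the recursion $(r_{u,m+1},s_{u,m+1})=(r_{u,m},s_{u,m})A_{u,0}$ gives $t_{m+1}=x_ut_m-(1-x_u)/2$; passing to the limit along this subsequence and using $t_m,t_{m+1}\to t^{*}$ shows that $t^{*}$ is the fixed point $-1/2$ of $t\mapsto x_ut-(1-x_u)/2$. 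Equating, $(1-3x_u)/(2x_u)=-1/2$, i.e. $x_u=1/2$, i.e. $u=1$, contradicting $u\neq1$. Hence $L=0$.

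The delicate point is the middle step: one must be sure that two-sided differentiability genuinely forces the one-step ratio $D_{m+1}/D_m\to1/2$ — this is precisely where the strict straddle, and hence $x\notin D$, is used — and that the auxiliary sequence $t_m$ really converges to $t^{*}$ rather than escaping to the pole of $\theta^{-1}$. Once $\theta(t_m)\to1/2$ and the digit-$0$ recursion are in hand, ruling out $u\neq1$ is just the short algebraic comparison of $\theta^{-1}(1/2)$ with the fixed point $-1/2$.
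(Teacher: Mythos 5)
Your proposal is correct and follows essentially the same route as the paper: your $\theta$ is exactly the paper's $p_{u,0}(z)=(z+1)/(z+\gamma_u)$ (with $\gamma_u=(1+x_u)/(2x_u)$), your straddle argument and increment-ratio-to-$1/2$ step match the paper's, and your limit $t^{*}=(1-3x_u)/(2x_u)$ is the paper's $\gamma_u-2$. The only (harmless) difference is cosmetic: you pin down $u=1$ using the fixed-point equation of the digit-$0$ recursion alone, whereas the paper invokes the fixed-point condition $\Phi({}^t\!A_{u,i};\gamma_u-2)=\gamma_u-2$ for both digits $i=0,1$.
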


\begin{proof}
We assume that there exists a point $x \in [0,1] \setminus D$ such that $\tilde g_{u}$ is differentiable at $x$ and $\tilde g_{u}^{\prime}(x) \in (0, +\infty)$.

Since $\tilde g_{u}$ is strictly increasing and $x \notin D$, 
we have that
\[ \tilde g_{u}^{\prime}(x) = \lim_{n \to \infty} 2^{n}(\tilde g_{u}(\zeta_{n}(x) + 2^{-n}) - \tilde g_{u}(\zeta_{n}(x))) = \lim_{n \to \infty} 2^{n}(g_{u}(\zeta_{n}(x) + 2^{-n}) - g_{u}(\zeta_{n}(x))).\]
Since $\tilde g_{u}^{\prime}(x) \in (0, +\infty)$, 
\[ \lim_{n \to \infty} \frac{g_{u}(\zeta_{n+1}(x) + 2^{-(n+1)}) - g_{u}(\zeta_{n+1}(x))}{g_{u}(\zeta_{n}(x) + 2^{-n}) - g_{u}(\zeta_{n}(x))} = \frac{1}{2}.\]

Then, by using Proposition 3.4(1), 
\[ p_{u, X_{n+1}(x)}\left(\frac{r_{u,n}(x)}{s_{u,n}(x)}\right) = \frac{g_{u}(\zeta_{n+1}(x) + 2^{-(n+1)}) - g_{u}(\zeta_{n+1}(x))}{g_{u}(\zeta_{n}(x) + 2^{-n}) - g_{u}(\zeta_{n}(x))}, \]
and, $\lim_{n \to \infty} p_{u, X_{n+1}(x)}(r_{u,n}(x)/s_{u,n}(x)) = 1/2$.
Since $p_{u,1} = 1-p_{u,0}$, \\
$\lim_{n \to \infty} p_{u, i}(r_{u,n}(x)/s_{u,n}(x)) = 1/2$ for $i=0,1$.
Now we see that \\
$\lim_{n \to \infty}r_{u,n}(x)/s_{u,n}(x) = \gamma_{u} - 2$.
Since $x \notin D$, 
there exists  infinitely many natural numbers $n$ such that $X_{n}(x) = i$ for each $i=0,1$.
Since $r_{u,n+1}(x)/s_{u,n+1}(x) = \Phi\left({}^t \! A_{u, X_{n+1}(x)}; r_{u,n}(x)/s_{u,n}(x)\right)$, 
we see that $\Phi\left({}^t \! A_{u,i}; \gamma_{u}-2\right) = \gamma_{u}-2$ for each $i=0,1$.
This is true if and only if $u=1$.
But this contradicts  the assumption. 
\end{proof}

Let $u \ne 1$.
Then, by noting Lemma 3.5 and the Lebesgue differentiation theorem, 
we see that $\tilde g_{u}^{\prime} = 0$ a.e. and  $\widetilde P^{u}$ is singular.
These complete the proof of (3). 
\end{proof}

\begin{proof}[Proof of Theorem 1.4]
In this proof, we write $\Phi_{u,i}(z) = \Phi(A_{u,i} ; z)$, $i = 0,1$.
We first explain the meaning of the value $u = \sqrt{3}$.
By explicit calculation,
we see that if $u < \sqrt{3}$, 
then, $0 < \Phi_{u,1}^{\prime}(z) < 1$, $z \in [0,1]$, 
namely,
$\Phi_{u,1}(\cdot)$ is a contraction map on $[0,1]$, and
de Rham's theory \cite{de} is applicable to $(A_{u,0}, A_{u,1})$ in the form of \cite{O2}. 
In contrast, this property fails if $u \ge \sqrt{3}$.
In fact, $\Phi_{\sqrt{3},1}^{\prime}(z) \le 1$, with $\Phi_{\sqrt{3},1}^{\prime}(z) = 1$ implying $z=1$.
If $u > \sqrt{3}$, there exists $z_{0} = z_{0}(u) \in (0,1)$ such that $\Phi_{u,1}^{\prime}(z) < 1$ for $z < z_{0}$, and  $\Phi_{u,1}^{\prime}(z) > 1$ for $z > z_{0}$. 

We now turn to the proof of the theorem. 
We denote $f^{m+1} = f \circ f^{m}$, $m \geq 1$, for $f : [0,1] \to [0,1]$. 

(1)  If $0 < u < \sqrt{3}$, then, $(A_{u,0}, A_{u,1})$ satisfies the conditions (A1) - (A3) in \cite{O2} and hence $\widetilde P^{u}$ has no atoms. 

Let $u = \sqrt{3}$. 
Let $h_{i} = \Phi_{ \sqrt{3},i}$, $i=0,1$.
Then we have the following results by computations.
\begin{Lem}
$(1)$ $h_{0}(z) < h_{1}(z)$ for $z \in [0,1]$. \\
$(2)$ $h_{i}^{\prime}$, $i=0,1$, are strictly increasing on $(0,1)$.\\
$(3)$ $h_{0}^{\prime}(z) \leq 3h_{1}^{\prime}(z)$ for $z \in (0,1)$.\\
$(4)$ $h_{0}^{\prime}(z) \leq h_{1}^{\prime}(z)$ for $z \geq h_{1}^{2}(0)$.
\end{Lem}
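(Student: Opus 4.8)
The plan is to make everything explicit at the distinguished value $u=\sqrt3$ and then reduce each of the four claims to an elementary inequality for a rational function. First I would record that $x_{\sqrt3}=1/3$ (since $\sqrt{1+8\cdot 3}=5$) and that $u^2x_u^2 = 3\cdot(1/3)^2 = 1/3$, so the two matrices become $A_{\sqrt3,0}=\begin{pmatrix}1/3 & 0\\ -1/3 & 1\end{pmatrix}$ and $A_{\sqrt3,1}=\begin{pmatrix}0 & 1/3\\ -1/3 & 2/3\end{pmatrix}$. Substituting into $\Phi(A;z)=(az+b)/(cz+d)$ yields the closed forms $h_0(z)=z/(3-z)$ and $h_1(z)=1/(2-z)$, both well defined and smooth on $[0,1]$ because the denominators $3-z$ and $2-z$ remain strictly positive there. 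With these formulas in hand, the entire lemma is a matter of rational-function manipulation, exactly as the statement ``by computations'' suggests.

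For part (1) I would put $h_1-h_0$ over a common denominator, obtaining $h_1(z)-h_0(z) = (z^2-3z+3)/\bigl((2-z)(3-z)\bigr)$; the denominator is positive on $[0,1]$, and the numerator $z^2-3z+3$ has negative discriminant $9-12<0$ and positive leading coefficient, hence is strictly positive, giving $h_0<h_1$ throughout. For part (2) I would differentiate to get $h_0'(z)=3/(3-z)^2$ and $h_1'(z)=1/(2-z)^2$; on $(0,1)$ the factors $3-z$ and $2-z$ are positive and strictly decreasing, so their squares decrease and the reciprocals increase, whence both derivatives are strictly increasing (equivalently $h_0''=6/(3-z)^3>0$ and $h_1''=2/(2-z)^3>0$). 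Part (3) then cancels the common factor $3$ and reduces to $(2-z)^2\le(3-z)^2$, immediate from $0<2-z<3-z$.

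Part (4) is the step carrying genuine content, since its threshold is sharp rather than arbitrary. I would first compute $h_1^2(0)=h_1(1/2)=2/3$. The inequality $h_0'(z)\le h_1'(z)$ is equivalent to $3(2-z)^2\le(3-z)^2$, i.e.\ to $2z^2-6z+3\le 0$, whose roots are $(3\pm\sqrt3)/2$. Since $(3-\sqrt3)/2\approx 0.634<2/3$ and $(3+\sqrt3)/2>1$, the quadratic is nonpositive on all of $[2/3,1]$, which is precisely the range $z\ge h_1^2(0)$ inside $[0,1]$. The one point demanding care, and the real crux, is the comparison $(3-\sqrt3)/2<2/3$: this is equivalent to $3\sqrt3>5$, i.e.\ $27>25$, and it is exactly this margin that makes the stated threshold $h_1^2(0)=2/3$ the correct (and essentially sharp) cutoff. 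I expect no serious obstacle beyond this verification; the difficulty is bookkeeping and getting the threshold right rather than any conceptual step.
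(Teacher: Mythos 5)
Your proposal is correct and takes the same route as the paper: the paper asserts Lemma 3.6 ``by computations'' without writing them out, and your explicit formulas $h_0(z)=z/(3-z)$, $h_1(z)=1/(2-z)$ together with the four elementary rational-function verifications (including the sharp check $3\sqrt{3}>5$ for the threshold $h_1^2(0)=2/3$ in part (4)) are precisely those computations, carried out correctly.
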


Now it is sufficient to show the following. 
\[ \lim_{m \to \infty}\max_{1 \leq k \leq 2^{m}} \left\{g_{\sqrt{3}}\left(\frac{k}{2^{m}}\right) - g_{\sqrt{3}}\left(\frac{k - 1}{2^{m}}\right) \right\} = 0. \tag{3.4} \]

Let $m \geq 3$ and $1 \leq k \leq 2^{m}$.
Let $x_{i} = X_{i}((k - 1)/2^{m})$, $1 \leq i \leq m$.
Then, 
\begin{align*}
g_{\sqrt{3}}\left(\frac{k}{2^{m}}\right) - g_{\sqrt{3}}\left(\frac{k - 1}{2^{m}}\right)  &= h_{x_{1}} \circ \cdots \circ h_{x_{m}}(1) - h_{x_{1}} \circ \cdots \circ h_{x_{m}}(0) \\
&= \int_{0}^{1} (h_{x_{1}} \circ \cdots \circ h_{x_{m}})^{\prime}(x) dx \\
&=  \int_{0}^{1} h_{x_{1}}^{\prime}(h_{x_{2}} \circ \cdots \circ h_{x_{m}}(x))  \cdots h_{x_{m - 1}}^{\prime}(h_{x_{m}}(x)) h_{x_{m}}^{\prime}(x)  dx \\
&\leq  \int_{0}^{1} h_{x_{1}}^{\prime}(h_{1}^{m-1}(x))  \cdots h_{x_{m - 1}}^{\prime}(h_{1}(x)) h_{x_{m}}^{\prime}(x)  dx \\
&\leq  \int_{0}^{1} h_{1}^{\prime}(h_{1}^{m-1}(x))  \cdots 3 h_{1}^{\prime}(h_{1}(x)) 3 h_{1}^{\prime}(x)  dx \\
&= 9 \int_{0}^{1} (h_{1}^{m})^{\prime}(x) dx  = 9(1- h_{1}^{m}(0)),
\end{align*}
where we have used Proposition 3.4 (1) for the first equality, Lemma 3.6 (1) and (2) for the fourth inequality, and,  Lemma 3.6 (3) and (4) for the fifth.
Since $h_{1}^{n}(0) = n/(n + 1)$, $n \geq 1$, 
we see that $\lim_{n \to \infty} h_{1}^{n}(0) = 1$.
Thus we see (3.4) and 
the proof of the assertion (1) completes. 

(2) 
Let $x \in D \cap (0,1)$.
Let $x_{i} = X_{i}(x)$, $i \geq 1$.
Then, there exists a unique $m \geq 1$
such that $x_{m} = 1$ and $x_{i} = 0$, $i \geq m+1$.
Let $\phi = \Phi_{u, x_{1}} \circ \cdots \circ \Phi_{u, x_{m - 1}} \circ \Phi_{u,0}$.
Let $n > m$ and $y_{i} = X_{i}(x - (1/2^{n}))$.
Then, we have that $y_{i} = x_{i}$, $1 \leq i \leq m-1$, 
$y_{m}  = 0$, 
$y_{i}  = 1$, $m+1 \leq i \leq  n$, and, 
$y_{i}  = 0$, $i >  n$. 
By noting Proposition 3.4 (1) and $\Phi_{u,0}(1) = \Phi_{u,1}(0)$, we have that 
\[g_{u}(x) = \phi (1) , \, \, g_{u}\left(x - \frac{1}{2^{n}}\right) = \phi (\Phi_{u, 1}^{n-m}(0)). \tag{3.5} \]

Note that $\Phi_{u,1}$ is increasing and strictly convex, $\Phi_{u,1}(0) > 0$, $\Phi_{u,1}(1) = 1$, and, $\Phi_{u,1}^{\prime}(1) > 1$. 
Therefore, 
there exists $z_{1} \in (0,1)$ such that
\[ \Phi_{u, 1}(z_{1}) = z_{1}, \, \Phi_{u, 1}(z) > z, \, z \in (0, z_{1}), \,  \Phi_{u, 1}(z) < z, \, z \in (z_{1}, 1).\]
Then, $z_{1} = \lim_{n \to \infty} \Phi_{u, 1}^{n}(0)$ and $\Phi_{u, 1}^{n}(0) \leq z_{1} < 1$, $n \ge 1$.

We have that for $n > m$, 
\begin{align*}
	\widetilde P^{u}\left(\left(x - \frac{1}{2^{n}}, x\right]\right) &=  g_{u}(x) -  g_{u}\left(x - \frac{1}{2^{n}}\right)  \\
									  									  &=  \phi (1) -  \phi (\Phi_{u, 1}^{n-m}(0)) \\
									  									&\ge \phi (1) - \phi ( z_{1}),
\end{align*}
where we have used Proposition 3.4 (2) for the first equality, and, (3.5) for the second.
Letting $n \to \infty$, we have that $\widetilde P^{u}(\{x\}) \ge \phi(1)  - \phi (z_{1}) > 0$.

We can show that $\widetilde P^{u}(\{1\}) > 0$ in the same manner.
These complete the proof of the assertion (2).
\end{proof}

\vspace{1\baselineskip}

\begin{flushright}
\textsc{Graduate School of Mathematical Sciences, \\
The University of Tokyo \\
Komaba 3-8-1, Meguro-ku, Tokyo, 153-8914, Japan \\ 
e-mail address :} \texttt{kazukio@ms.u-tokyo.ac.jp}
\end{flushright}

\end{document}